\newcommand{\longgraph}[4]{ \ensuremath{
 \begin{aligned}
 \begin{xy}                      
  (0,-2)*[o]+=<7pt>{\scriptstyle #1}="a",
  (6,2)*[o]+=<7pt>{\scriptstyle #2}="b", 
  (12,-2)*[o]+=<7pt>{\scriptstyle #3}="c",
  (18,2)*[o]+=<7pt>{\scriptstyle #4}="d",
  "a";"b"**\dir{-}?>*\dir{>},    
  "b";"c"**\dir{-}?>*\dir{>},    
  "c";"d"**\dir{-}?>*\dir{>}     
 \end{xy}                        
 \end{aligned}
} }
\newcommand{\graphpp}[3]{ \ensuremath{
 \begin{aligned}
 \begin{xy}                      
  (0,-2)*[o]+=<7pt>{\scriptstyle #1}="a",
  (6,2)*+=<7pt>{\scriptstyle #2}="b", 
  (12,-2)*+=<7pt>{\scriptstyle #3}="c",
  "a";"b"**\dir{-}?>*\dir{>},    
  "b";"c"**\dir{-}?>*\dir{>}     
 \end{xy}                        
 \end{aligned}
} }
\newcommand{\linep}[2]{ \ensuremath{   
 \begin{aligned}
 \begin{xy}     
	(0,-2)*[o]+=<8pt>{\scriptstyle #1}="a",
	(6,2)*[o]+=<8pt>{\scriptstyle #2}="b", 
  "a";"b"**\dir{-}?>*\dir{>},   
 \end{xy}   
 \end{aligned}
} }
\newcommand{\dlinep}[2]{ \ensuremath{   
 \begin{aligned}
 \begin{xy}     
	(0,2)*[o]+=<8pt>{\scriptstyle #1}="b",
	(6,-2)*[o]+=<8pt>{\scriptstyle #2}="c", 
  "b";"c"**\dir{-}?>*\dir{>},   
 \end{xy}   
 \end{aligned}
} }
\newcommand{\lgbrack}{\lfloor\hspace{-.27em}\lfloor} 
\newcommand{\rgbrack}{\rfloor\hspace{-.27em}\rfloor}
\theoremstyle{plain}                          
\newtheorem{theorem}{Theorem}[section]                          
\newtheorem{proposition}[theorem]{Proposition}                          
\newtheorem{lemma}[theorem]{Lemma}                          
\newtheorem{corollary}[theorem]{Corollary}
\theoremstyle{definition}                          
\newtheorem{definition}[theorem]{Definition}  
\theoremstyle{remark}                          
\newtheorem{remark}[theorem]{Remark}
\newtheorem{example}[theorem]{Example}
\begin{document}

\title{The left-greedy Lie algebra basis and star graphs}

\author[B. Walter]{Benjamin Walter} 
\address{
Department of Mathematics \\ Middle East Technical University, Northern Cyprus Campus \\
Kalkanli, Guzelyurt, KKTC, Mersin 10 Turkey
}
\email{benjamin@metu.edu.tr}

\author[A. Shiri]{Aminreza Shiri}
\thanks{
This work is the result of an undergraduate research project of the second author 
at the Middle East Technical University, Northern Cyprus Campus during the summer 
and fall of 2014.
}

\subjclass{17B35; 17B62, 16T15, 18D50.}
\keywords{Lie algebras, free Lie algebras, bases}


\begin{abstract}
 We construct a basis for free Lie algebras via a ``left-greedy'' bracketing algorithm on
 Lyndon-Shirshov words.  We use a new tool -- the configuration
 pairing between Lie brackets and graphs of Sinha-Walter -- to show that the
 left-greedy brackets form a basis.  Our constructions further equip 
 the left-greedy brackets 
 with a dual monomial Lie coalgebra basis of ``star'' graphs. 
 We end with a brief example using the dual basis of star graphs in a 
 Lie algebra computation.
\end{abstract}

\maketitle

\section{Introduction}

Lie algebras are classical objects with applications in differential geometry, theoretical physics, and computer science.
A Lie algebra is a vector space which has an extra non-associative (bilinear) operation called a
``Lie bracket'', written $[a,b]$.  The Lie bracket operation satisfies anti-commutativity 
and Jacobi relations.
\[
\begin{aligned}
\text{(Anti-commutativity)} \quad & \quad
    0 = [a,b] + [b,a]  \\
\text{(Jacobi)} \quad & \quad 
    0 = [a,[b,c]] + [c,[a,b]] + [b,[c,a]] 
\end{aligned}
\]
A free Lie algebra is a Lie algebra whose bracket operation satisfies no extra
relations -- only the two written above and any relations which can be generated by
combining them together. For example
\[
   [a,[b,c]] - [[a,b],c] = [[c,a],b]
\]
is a relation for free Lie algebras since \( [c,[a,b]] = -[[a,b],c]\) by 
anti-commutativity, and similarly for \([b,[c,a]]\).
Free Lie algebras are fundamental in that every Lie algebra can be written 
``via generators and relations'' as a free Lie algebra
with further extra relations placed on its bracket operation.

Recall that a set of elements generates an algebra if all other elements in the algebra can be
obtained via sums of products of elements from the set.  A minimal generating set is 
called an 
algebra basis.  We are interested in linear bases for  
algebras -- these are 
minimal sets consisting of an algebra basis along with enough products of these so 
that all further algebra elements
can be reached using only sums.

The current work describes a new linear basis for
free Lie algebras; along with a new method for finding, 
computing with, and proving theorems about general free Lie algebra
bases.  Our method uses the graph/tree pairing developed in
\cite{Sin05} and \cite{Sin06}, which yields a new way to 
describe Lie coalgebras via graphs as applied in
\cite{SiWa11} and explained further in \cite{Wal10}.
Since we introduce a new and different way to perform
calculations in free Lie algebras, we give many detailed examples throughout.
For readers interested in a history of bases of free Lie algebras, we suggest
\cite[\S 3.2]{BoCh06}.

\medskip

We would like to thank the reviewer for a careful and detailed reading of our submission and 
also for suggesting that we consider an application of Lazard elimination (see Remark~\ref{R:Lazard})
and rewriting systems on Lyndon words (see Remark~\ref{R:rewrite}).

\section{Notation and Classical Constructions}

In this paper we will say ``alphabet'' for a collection of abstract letters (or variables).
A ``word'' in an alphabet is a (non-commutative, associative) string (or product) of letters 
from the alphabet.
An ordering on an alphabet 
(such as the standard alphabetical ordering in English) 
induces an ordering on words called the ``lexicographical'' (or dictionary) ordering.
The cyclic permutations of a word are given by removing letters from the beginning of the word 
and moving them to the end.

\begin{example}
The cyclic permutations of the word $abcd$ are
$bcda$, $cdab$, $dabc$. \\
The cyclic permutations of the word $aaabb$ are
$aabba$, $abbaa$, $bbaaa$ and $baaab$. 
\end{example}

Classically, a Lyndon-Shirshov word \cite{ChFoLy58} \cite{Shir09} (often called a ``Lyndon word'') is a word which is
lexicographically less than all of its cyclic permutations.  There are several methods to
form bases for free Lie algebras using Lyndon-Shirshov words
\cite{MeRe89} \cite{Reu93} \cite{Chi06} \cite{Sto08}.
For example, the standard bracketing \cite{Reu93} of an
Lyndon-Shirshov word $w$ is written $[w]$, given by splitting the word $w$ into two (nonempty) sub-words 
$w = uv$,
such that the subword $v$ is a maximally long Lyndon-Shirshov word, and
then recursively defining $[w] = [ [u], [v] ]$ (where the bracketing of a single letter word
is itself, $[a] = a$).  The collection of all standard bracketings of Lyndon-Shirshov words 
gives a linear
basis for the free Lie algebra on the underlying alphabet.

\begin{example}
The word $aaabb$ is a Lyndon-Shirshov word, but not $aabba$ or $abbaa$.  The word $abab$ is not 
a Lyndon-Shirshov word -- since it is a cyclic permutation of itself, it is not less than all of its
cyclic permutations.  The standard bracketing of $aaabb$ is
\begin{align*} 
[aaabb] &= \bigl[\, [a],\ [aabb]\, \bigr] \\  
	&= \bigl[a,\,\bigl[\ [a],\ [abb]\,\bigr]\bigr]  \\
    &= \bigl[a,\,\bigl[a,\,\bigl[\,[ab],\ [b]\,\bigr]\bigr]\bigr]  \\
    &= \bigl[a,\,\bigl[a,\,\bigl[\bigl[a,\,b\bigr],\,b\bigr]\bigr]\bigr]
\end{align*}
\end{example}

The standard bracketing of a Lyndon-Shirshov word can also be described recursively from the
inner-most brackets to the outer-most roughly as follows:  Read the letters of a Lyndon-Shirshov word from 
right to left looking for the first occurrence of consecutive letters $...a_ia_{i+1}...$ where
$a_i < a_{i+1}$ (called the ``right-most inversion'').  Replace $a_ia_{i+1}$ by the bracket $[a_i,a_{i+1}]$ 
which we will consider to be a new ``letter'' placed in the ordered alphabet in the lexicographical
position of the word ``$a_ia_{i+1}$''.
Repeat. (For a more detailed description see \cite[\S 2]{MeRe89}.)

Our construction of left-greedy brackets will also proceed from the inner-most bracket to outer-most
bracket, similar to the ``rewriting system'' presented above.  
However, just as with the standard bracketing,
left-greedy brackets can also be described from the outer-most to inner-most brackets.

\section{Left-Greedy Brackets and Star Graphs}

\subsection{Simple words}

\begin{definition}
Given a fixed letter $a$ in an alphabet, 
an $a$-simple word is a word of the form $w = aa\cdots ax$ (written $w = a^n x$ for short) where
$x$ is any single letter not equal to $a$.
The single-letter word, $w = x$ (i.e. $w=a^0x$), is also an $a$-simple word (for $x\neq a$).
\end{definition}

The collection of all words in an alphabet is itself an (infinite) ordered alphabet 
(with the lexicographical ordering).  A word in the alphabet whose letters are words in 
another alphabet will be casually referred to as a ``word of words.''  Note that such an 
expression of a word as a product of subwords is equivalent to partition of the word.  
Considering words as ordered sets of letters, partitions are order-preserving surjections 
of sets; hence our notation for partitioning a word will be a double-headed arrow
$\twoheadrightarrow$. 


\begin{remark}\label{R:rewrite}
A partition of a word is equivalent to a rewriting (cf \cite{Mel97})
which combines multiple subwords in parallel.  For our construction and proofs
we will critically make use of the levels of nesting of partitions.  We use
the term ``partition'' so that both our 
notation and our terminology reflect this emphasis.
\end{remark}

\begin{definition}
A simple partition of a word $w$ is an expression of $w$ as subwords
$w = \alpha_1 \alpha_2 \ldots \alpha_k$ where each $\alpha_i$ is an $a$-simple word 
and $a$ is the first letter of $w$.  We will write 
$w\twoheadrightarrow \alpha_1 \alpha_2 \ldots \alpha_k$.
\end{definition}

Note that words have at most one simple partition.
The subword $\alpha_1$ must consist of the initial string of $a$'s as well as the first non-$a$ letter of $w$.
If the letter in $w$ following $\alpha_1$ is $a$, then $\alpha_2$ must consist of the next string of $a$'s as well
as the next non-$a$ letter.  If the letter following $\alpha_1$ is not $a$, then $\alpha_2$ will 
consist of only that one letter.  (See the first line of the examples in~\ref{E:simple words}.)

\begin{remark}\label{R:Lazard}
A simple partition of a word is equivalent to performing Lazard elimination \cite[Ch.5]{Loth97} on
the word, eliminating the first letter $a$ via the bissection 
$\bigl(a^*(A\backslash a),\,a\bigr)$.  
It appears likely that the constructions of nested partitions and fully partitioned words
which will follow may also be performed via a recursive series of Lazard elimination steps along
the lines of: ``Order all words lexicographically. Eliminate them, one at at time, beginning
with the least ordered word $a$.'' 
[Possibly it will be best to restrict to words of length $\le n$ at first.]

This would give an alternate proof that the left-greedy 
brackets form a basis.  However, making the previous statement precise and showing
that it gives a well-defined recursive operation which will terminate is complicated. 
Also, following such a path would not yield the dual basis of star graphs, which we wish to
exploit in later work.
\end{remark}

Given a simple partition $w \twoheadrightarrow \alpha_1\alpha_2 \ldots \alpha_k$, 
we may recurse: The $a$-simple subwords $\alpha_1$, $\alpha_2$, etc. are letters in the alphabet
of words. They may have a further simple partition (now as $\alpha_1$-simple words). 
This process constructs a unique nested partition of a word such that each nested level  
is a simple partition.

\begin{definition}
A word fully partitions if it has a series of simple partitions
\[ w \twoheadrightarrow \omega_1 \twoheadrightarrow \cdots \twoheadrightarrow \omega_\ell \]
where $\omega_\ell$ is the trivial coarse partition. 
\end{definition}

Colloquially, a word fully partitions if it is a simple word of simple words of simple words of etc.

\begin{example}\label{E:simple words}
Words fully partition as follows (for clarity we will use distinct delimiters (, [, and \{ to indicate
different nested levels of partition.)

$\bullet \quad aaaab 
	\twoheadrightarrow (aaaab)$

\vspace{.1em}
    
$\bullet \quad 
  \begin{aligned}[t]
	ababb 
	&\twoheadrightarrow (ab)\;(ab)\;(b)  \\
	&\twoheadrightarrow \bigl[ (ab)(ab)(b) \bigr] 
  \end{aligned}$ 
  
\vspace{.1em}
    
$\bullet \quad 
  \begin{aligned}[t]
	aabcb 
	&\twoheadrightarrow (aab)\;(c)\;(b)  \\
    &\twoheadrightarrow \bigl[ (aab)(c) \bigr] \; \bigl[(b)\bigr] \\
    &\twoheadrightarrow \Bigl\{ \bigl[ (aab)(c) \bigr] \bigl[(b)\bigr] \Bigr\}
  \end{aligned}$ 
  
\vspace{.1em}
    
$\bullet \quad 
  \begin{aligned}[t]
	ababbabaab  
	&\twoheadrightarrow (ab)\;(ab)\;(b)\;(ab)\;(aab)  \\
    &\twoheadrightarrow \bigl[ (ab)(ab)(b) \bigr]\;\bigl[(ab)(aab)\bigr] \\
    &\twoheadrightarrow \Bigl\{ \bigl[ (aab)(b) \bigr] \bigl[(ab)(aab)\bigr] \Bigr\}
  \end{aligned}$
    
\noindent    
For visual clarity, we have found that indicating nested partitions via underlining is 
often more understandable than using nested parentheses.

$\bullet \quad aaaab 
	\twoheadrightarrow \underline{aaaab}$ 
    
$\bullet \quad ababb 
	\twoheadrightarrow \underline{\underline{ab}\;\underline{ab}\;\underline{b}} 
    \vphantom{\bigl)}$ 
    
$\bullet \quad aabcb 
	\twoheadrightarrow \underline{\underline{\underline{aab}\;\underline{c}}\;\underline{\underline{b}}} 
    \vphantom{\Bigl)}$ 
    
$\bullet \quad ababbabaab 
	\twoheadrightarrow \underline{\underline{\underline{ab}\;\underline{ab}\;\underline{b}}\;
                       \underline{\underline{ab}\;\underline{aab}}} 
    \vphantom{\Bigl)}$ 
    
$\bullet \quad abcabcabbabcaab 
	\twoheadrightarrow \underline{
    		\underline{\underline{\underline{ab}\;\underline{c}}\;
                       \underline{\underline{ab}\;\underline{c}}\;
                       \underline{\underline{ab}\;\underline{b}}}\;
            \underline{\underline{\underline{ab}\;\underline{c}}\;
                       \underline{\underline{aab}}}}
    \vphantom{\Bigl)}$ 
\end{example}

\begin{example}
Some words do not fully partition. 

$\bullet \quad aaaa$ \quad contains repetitions of only one letter. 

$\bullet \quad aaba$ \quad has the same initial and final letter. 

$\bullet \quad abab \twoheadrightarrow (ab)(ab)$ \quad which is repetitions of a single subword $(ab)$. 

$\bullet \quad abaabab \twoheadrightarrow (ab)(aab)(ab)$ \quad  which has the same initial and final subword $(ab)$.

$\bullet \quad ababbcababb \twoheadrightarrow\bigl[(ab)(ab)(b)\bigr]\bigl[(c)\bigr]\bigl[(ab)(ab)(b)\bigr]$ \quad
	which has the same initial and final subword
    $\bigl[(ab)(ab)(b)\bigr]$. 
\end{example}

The following simple lemma follows immediately from standard facts about Lyndon-Shirshov words.  We
give a proof below for completeness.

\begin{lemma}
Every Lyndon-Shirshov word fully partitions.
\end{lemma}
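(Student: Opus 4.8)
The plan is to induct on word length, carried out in the slightly more general setting of Lyndon words over an \emph{arbitrary} totally ordered alphabet --- this generality is forced on us, since the blocks of a simple partition are themselves treated as letters of a new (lexicographically ordered) alphabet, so the inductive hypothesis must already be available there. Thus, let $w$ be a Lyndon-Shirshov word; I argue that $w$ fully partitions by induction on $|w|$, the case $|w|=1$ being immediate. Assume now $|w|\ge 2$ and let $a$ be the first letter of $w$.

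The first step is to produce a simple partition of $w$ and to bound its length. Here I would invoke the standard fact that a Lyndon word is lexicographically strictly smaller than every one of its proper suffixes; applied to the suffix of length one this says that $w$ does not end in $a$. But any word that begins with $a$ and does not end in $a$ decomposes, uniquely, by scanning left to right and attaching each maximal run of $a$'s to the following non-$a$ letter (an isolated non-$a$ letter being its own block), and this decomposition is exactly the simple partition $w\twoheadrightarrow\alpha_1\alpha_2\cdots\alpha_k$. Because $w$ begins with $a$, the first block has the form $\alpha_1=a^{m}x$ with $m\ge 1$ and $x\neq a$, so $|\alpha_1|\ge 2$ and therefore $k\le|w|-1<|w|$; this strict drop is what drives the induction. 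If $k=1$ then $w$ is $a$-simple and $w\twoheadrightarrow(w)$ is already the trivial partition, so we may assume $k\ge 2$.

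The crux is to show that the ``word of words'' $\omega_1:=\alpha_1\alpha_2\cdots\alpha_k$ is again a Lyndon word, now over the totally ordered alphabet consisting of all $a$-simple words. The observation that makes this work is that the set of $a$-simple words is a \emph{prefix code}: no $a$-simple word is an initial segment of a different one (a short check over the possible shapes $x$ and $a^{m}x$ with $x\neq a$). As a consequence, the concatenation map $\mu$ sending a finite sequence of $a$-simple words to the ordinary word it spells out is injective (prefix codes are uniquely decodable) and strictly monotone for the two lexicographic orders, hence an order embedding. Now use that $w$, being Lyndon, is primitive: the block sequence $(\alpha_1,\dots,\alpha_k)$ is then not a proper power, so cutting $\omega_1$ at any block boundary produces a genuine proper rotation of $w$, which is therefore $>w$. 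Transporting the inequalities $w=\mu(\alpha_1,\dots,\alpha_k)<\mu(\alpha_i,\dots,\alpha_k,\alpha_1,\dots,\alpha_{i-1})$ for $i=2,\dots,k$ back through the order embedding shows that $(\alpha_1,\dots,\alpha_k)$ is strictly smaller than each of its proper rotations, i.e. $\omega_1$ is Lyndon. Since $|\omega_1|=k<|w|$, the inductive hypothesis yields a chain of simple partitions $\omega_1\twoheadrightarrow\omega_2\twoheadrightarrow\cdots\twoheadrightarrow\omega_\ell$ terminating in the trivial partition, and prepending $w\twoheadrightarrow\omega_1$ completes the proof.

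The step I expect to be the real obstacle --- and the only one that uses the structure of simple partitions rather than soft facts about Lyndon words --- is the preservation of the Lyndon property under a single simple partition. The delicate points there are isolating the right ``standard fact'' (prefix-freeness of the $a$-simple alphabet) so that comparisons of words of words convert to comparisons of the underlying words with no sign errors, and using primitivity of $w$ to exclude the degenerate situation in which a block-boundary rotation of $\omega_1$ reproduces $w$ itself. Once this is in hand, existence of the simple partition, the length bound, and the bookkeeping of the induction over successive derived alphabets are all routine.
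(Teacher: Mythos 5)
Your proof is correct, but it takes a genuinely different route from the paper's. The paper argues directly and globally: the recursive partitioning can fail only when some level has matching first and last blocks, which unwinds to a factorization $w=\alpha\chi\alpha$ of the original word, and Lyndon-Shirshov words admit no such factorization because one of the cyclic permutations $\alpha\alpha\chi$ or $\chi\alpha\alpha$ would be lexicographically smaller. That argument is five lines and never needs to know anything about the derived words $\omega_i$ beyond the identity of their first and last blocks. You instead induct on length over an arbitrary totally ordered alphabet, and your crux --- that one simple partition of a Lyndon word yields a word that is again Lyndon over the lexicographically ordered alphabet of $a$-simple words, proved via prefix-freeness of that alphabet, the resulting order embedding given by concatenation, and primitivity of the block sequence --- is a strictly stronger structural statement; it is essentially the Lazard-elimination viewpoint that the paper only gestures at in Remark~\ref{R:Lazard}. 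I checked the delicate points and they hold: the $a$-simple words do form a prefix code, concatenation over a prefix code is strictly monotone for the lexicographic orders, and primitivity of $w$ guarantees that block-boundary rotations of $\omega_1$ map to proper rotations of $w$. What your approach buys is that each derived word $\omega_i$ is itself Lyndon over its alphabet and hence begins with its minimal block --- precisely the fact the paper later invokes without proof in Theorem~\ref{T:main} (``Lyndon-Shirshov words must begin with their lowest ordered subword''), so your lemma does double duty. What it costs is the extra machinery for a statement the paper dispatches by a short border-freeness argument.
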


\begin{proof}
Fix a word $w$ with initial letter $a$.
The only obstacle to the partition of $w$ into $a$-simple words is whether the final
letter and the initial letter match.  More generally, each step of the recursive partitioning can be
completed as long as the initial and final subword do not match.  This fails only if the word
has the form $w = \alpha \chi \alpha$ where $\alpha$ and $\chi$ are subwords 
(the subword $\chi$ may be empty and is likely not simple).  

However no Lyndon-Shirshov word has this form.  If $\chi$ is empty then $w=\alpha\alpha$ which is not Lyndon-Shirshov.  
If $\chi$ is
nonempty, then one of the cyclic reorderings of $w$ is lexicographically lower:  
Either $\alpha\alpha\chi < \alpha\chi\alpha$ (if $\alpha<\chi$) or else $\chi\alpha\alpha < \alpha\chi\alpha$ (if $\chi<\alpha$).
\end{proof}

\begin{remark}
Many non-Lyndon-Shirshov words also fully partition.
The requirement that $w\neq \alpha\chi\alpha$ for any subwords $\alpha$ and $\chi$ is much weaker than the Lyndon-Shirshov requirement.
Via some experimentation, we have found that it is possible to use methods similar to those 
presented in the current work to find new bases
for Lie algebras which are constructed from sets of words other than the Lyndon-Shirshov words.  
It is unclear if these sets of words are also bases for the shuffle algebra. 
\end{remark}

\subsection{Left-greedy brackets}

\begin{definition}
 The left-greedy bracketing of the $a$-simple word $w=a^n x$, denoted $\lgbrack w\rgbrack$, is the
 standard right-normed bracketing $\lgbrack a^n x\rgbrack = [a,[a,\ldots[a, [a,x]]\cdots]]$. \\
 The left-greedy bracketing of a simple word of simple words (and, more generally, any fully 
 partitioned word) is defined recursively
 \[
  \lgbrack \alpha^n \chi\rgbrack = 
    \Bigl[\, \lgbrack \alpha \rgbrack\,,\ \Bigl[\, \lgbrack \alpha \rgbrack\,,\ \ldots
    \Bigl[\, \lgbrack \alpha \rgbrack\,,\ \Bigl[\, \lgbrack \alpha \rgbrack,\ \lgbrack \chi \rgbrack 
    \Bigr] \Bigr] \cdots \Bigr] \Bigr]
 \]
\end{definition}

\begin{example}
Following are some examples of left-greedy bracketings of fully partitioned words.  To aid understanding 
in the examples below, we underline to indicate their full partition into simple words. (Note that we
do not require for words to be Lyndon-Shirshov in order to define their left-greedy bracketing.)

$\bullet \quad \lgbrack \,
    \underline{aaab} 
    \, \rgbrack = 
	\Bigl[ a,\;\bigl[ a,\;[a,\;b]\bigr]\Bigr]$ 
    
$\bullet \quad \lgbrack\,  
    \underline{\underline{ab}\,\underline{ab}\,\underline{b}} 
    \, \rgbrack = 
	\Bigl[ [a,b],\ \bigl[ [a,b],\;b \bigr]\Bigr]$
    
$\bullet \quad \lgbrack\,  
	\underline{\underline{\underline{aab}\,\underline{c}}\;\underline{\underline{b}}} 
    \, \rgbrack =
	\Bigl[\bigl[ [a,[a,b]],\ c\bigr],\ b\Bigr]$
    
$\bullet \quad \lgbrack\,  
	\underline{
	 \underline{\underline{ab}\,\underline{ab}\,\underline{b}}\;
     \underline{\underline{ab}\,\underline{aab}} 
    }
    \, \rgbrack =
	\Bigl[ \bigl[[a,b],\ \bigl[[a,b],\;b\bigr]\bigr],\ \bigl[[a,b],\;[a,[a,b]] \bigr]\Bigr]$
\end{example}

\begin{remark}
The name ``left-greedy'' is due to the fact that the bracketing of the word $aaabcd$ begins with
inner-most bracket $[a,b]$ and then brackets leftwards -- $[a,[a,[a,b]]]$ before bracketing to the 
right.  An alternative ``right-greedy'' bracketing, would go to the right $[[[a,b],c],d]$ before 
bracketing leftwards.  Both of these yield free Lie algebra bases, but the left-greedy bracketing 
has a cleaner basis proof and appears to have better properties.  We leave the discussion of 
the beneficial properties
of the left-greedy bracketing to a later paper.
\end{remark}

\begin{remark}
Left-greedy bracketing of Lyndon-Shirshov words is different than other bracketing methods considered in the 
literature.  We will give a few examples here for quick comparison with some other similar 
methods.  Consider the Lyndon-Shirshov word $w = aababb$.

$\bullet \quad \lgbrack aababb \rgbrack = [[[a,[a,b]],[a,b]],b] $ \quad 
		the left-greedy bracketing.
        
$\bullet \quad \ [aababb] = [a,[[a,b],[[a,b],b]]] $ \quad 
		the standard Lyndon-Shirshov-bracketing \cite{Reu93}.
        
$\bullet \quad \ \llbracket aababb \rrbracket = [[a,[a,b]],[[a,b],b]] $ \quad 
		the bracketing of Chibrikov \cite[\S 4]{Chi06}.
%
\end{remark}

\subsection{Star graphs}

By a ``graph'' we mean a finite, directed graph whose vertices are labelled by letters.

\begin{definition}
The star graph of the $a$-simple word $w=a^n x$, denoted $\bigstar(w)$, is the graph with $n$ vertices
labeled $a$, one vertex labeled $x$, and an edge from each $a$ vertex to the $x$ vertex.
The $x$ vertex is called the ``anchor vertex''.
\[
\bigstar (a^nx) = 
\begin{xy}
  (0,0)*[o]+=<9pt>{\scriptstyle x}*\frm{o}="c", 
  (-5.5,-3.2)*[o]+=<7pt>{\scriptstyle a}="a1", 
  (5.5,-3.2)*[o]+=<7pt>{\scriptstyle a}="a2", 
  (-5.8,2.4)*[o]+=<7pt>{\scriptstyle a}="a3", 
  (5.8,2.4)*[o]+=<7pt>{\scriptstyle a}="a4", 
  (0,5.7)*[o]+=<7pt>{\scriptstyle a}="a5",
  "a1";"c"+(-1.5,-1.5)**\dir{-}?>*\dir{>},    
  "a2";"c"+(1.5,-1.5)**\dir{-}?>*\dir{>},    
  "a3";"c"+(-1.7,.7)**\dir{-}?>*\dir{>},
  "a4";"c"+(1.7,.7)**\dir{-}?>*\dir{>},    
  "a5";"c"**\dir{-}?>*\dir{>},
  {\ar@/_5pt/@{.} "a1";"a2"}
\end{xy}
\]
The star graph of a simple word of simple words (and, more generally, any fully partitioned word) 
is defined recursively. 
\[
\bigstar (\alpha^n\chi) = 
\begin{xy}
  (0,0)*[o]+=<15pt>{\scriptstyle \bigstar \chi}*\frm{o}="c", 
  (-8.5,-6)*[o]+=<7pt>{\scriptstyle \bigstar \alpha}="a1", 
  (8.5,-6)*[o]+=<7pt>{\scriptstyle \bigstar \alpha}="a2", 
  (-8.8,4)*[o]+=<7pt>{\scriptstyle \bigstar \alpha}="a3", 
  (8.8,4)*[o]+=<7pt>{\scriptstyle \bigstar \alpha}="a4", 
  (0,8)*+U{\scriptstyle \bigstar \alpha}="a5",
  "a1";"c"+(-2.7,-1.7)**\dir{-}?>*\dir{>},    
  "a2";"c"+(2.7,-1.7)**\dir{-}?>*\dir{>},    
  "a3";"c"+(-2.5,1.5)**\dir{-}?>*\dir{>},
  "a4";"c"+(2.5,1.5)**\dir{-}?>*\dir{>},    
  "a5";"c"+(0,3)**\dir{-}?>*\dir{>},
  {\ar@/_5pt/@{.} "a1";"a2"}
\end{xy}
\]
The graph $\bigstar (\alpha^n\chi)$
consists of $n$ disjoint subgraphs $\bigstar (\alpha)$ 
and one disjoint subgraph $\bigstar (\chi)$ 
with edges connecting from the 
anchor vertices of the $\bigstar(\alpha)$ to the anchor vertex of $\bigstar (\chi)$.  
The anchor
vertex of the subgraph $\bigstar(\chi)$ serves as the anchor vertex of the star graph 
$\bigstar(\alpha^n \chi)$.
\end{definition}

\begin{remark}
The star graph of an $a$-simple word consisting of one (non-$a$) letter $w=x$ is a single anchor
vertex.
\[
\bigstar (x) = 
\begin{xy}
  (0,0)*+(1.5,1.5){\scriptstyle x}*\frm{o}="c", 
\end{xy}
\]
\end{remark}

\begin{example}
Following are some examples of star graphs.  In the examples below, the anchor vertex of the
subgraphs are indicated with dotted circles and the anchor vertex of the entire graph is 
indicated with a solid circle.
    
$\bullet \quad \bigstar(
 \underline{aaab} 
 ) = 
\begin{aligned}
\begin{xy}
  (0,0)*[o]+=<10pt>{\scriptstyle b}*\frm{o}="c", 
  (-5.5,-4.2)*[o]+=<7pt>{\scriptstyle a}="a1", 
  (5.5,-4.2)*[o]+=<7pt>{\scriptstyle a}="a2", 
  (0,6)*+U{\scriptstyle a}="a5",
  "a1";"c"+(-1.5,-1,5)**\dir{-}?>*\dir{>},    
  "a2";"c"+(1.5,-1.5)**\dir{-}?>*\dir{>},    
  "a5";"c"+(0,2)**\dir{-}?>*\dir{>},
\end{xy}
\end{aligned}$
 
$\bullet \quad \bigstar( 
 \underline{\underline{ab}\,\underline{ab}\,\underline{b}} 
 ) = 
\begin{aligned}
\begin{xy}
  (0,0)*[o]+=<10pt>{\scriptstyle b}*\frm{o}="c",
  (17,4)*[o]+=<10pt>{\scriptstyle b}*\frm{.o}="b2",
  (-10,4)*[o]+=<10pt>{\scriptstyle b}*\frm{.o}="b1",
  (-17,5)*[o]+=<7pt>{\scriptstyle a}="a1",
  (10,5)*[o]+=<7pt>{\scriptstyle a}="a2",
  "a1";"b1"**\dir{-}?>*\dir{>},
  "a2";"b2"**\dir{-}?>*\dir{>},
  "b1";"c"**\dir{-}?>*\dir{>},
  "b2";"c"**\dir{-}?>*\dir{>},
\end{xy}
\end{aligned}$

$\bullet \quad \bigstar(
  \underline{\underline{\underline{aab}\,\underline{c}}\;\underline{\underline{b}}} 
  ) = 
\begin{aligned}\begin{xy}
  (15,0)*[o]+=<10pt>{\scriptstyle c}*\frm{.o}="c",
  (30,2)*[o]+=<10pt>{\scriptstyle b}*\frm{o}="b2",
  (0,3)*[o]+=<10pt>{\scriptstyle b}*\frm{.o}="b1",
  (-7,5)*[o]+=<7pt>{\scriptstyle a}="a1",
  (7,5)*[o]+=<7pt>{\scriptstyle a}="a2",
  "a1";"b1"**\dir{-}?>*\dir{>},
  "a2";"b1"**\dir{-}?>*\dir{>},
  "b1";"c"**\dir{-}?>*\dir{>},
  "c";"b2"**\dir{-}?>*\dir{>},
\end{xy}\end{aligned}$

$\bullet \quad \bigstar(
  \underline{
   \underline{\underline{ab}\,\underline{ab}\,\underline{b}}\;
   \underline{\underline{ab}\,\underline{aab}} 
  }
 ) = 
 \begin{aligned}\begin{xy}
  (10,0)*[o]+=<10pt>{\scriptstyle b}*\frm{.o}="c",
  (20,4)*[o]+=<10pt>{\scriptstyle b}*\frm{.o}="b2",
  (0,4)*[o]+=<10pt>{\scriptstyle b}*\frm{.o}="b1",
  (-7,5)*[o]+=<7pt>{\scriptstyle a}="a1",
  (13,5)*[o]+=<7pt>{\scriptstyle a}="a2",
  "a1";"b1"**\dir{-}?>*\dir{>},
  "a2";"b2"**\dir{-}?>*\dir{>},
  "b1";"c"**\dir{-}?>*\dir{>},
  "b2";"c"**\dir{-}?>*\dir{>},
  (35,-2)*[o]+=<10pt>{\scriptstyle b}*\frm{o}="c2",
  (43,-3)*[o]+=<7pt>{\scriptstyle a}="a21",
  (28,1)*[o]+=<7pt>{\scriptstyle a}="a22",
  (47,5)*[o]+=<10pt>{\scriptstyle b}*\frm{.o}="b22",
  (40,6)*[o]+=<7pt>{\scriptstyle a}="a23",
  "a21";"c2"**\dir{-}?>*\dir{>},
  "a22";"c2"**\dir{-}?>*\dir{>},
  "b22";"c2"**\dir{-}?>*\dir{>},
  "a23";"b22"**\dir{-}?>*\dir{>},
  {\ar@/_6pt/ "c";"c2" }
\end{xy}\end{aligned}$
\end{example}

\begin{remark}
The name ``star graph'' comes from imagining the graph $\bigstar(a^n b)$ as a sun
($b$) with planets ($a$) orbiting around it.  The recursive construction of star graphs then 
composes suns and their planetary systems into orbiting star clusters, into galaxies, etc.
\end{remark}

\section{Configuration Pairing}

Throughout, assume that all graphs and Lie bracket expressions have labels and letters from 
the same alphabet.

\begin{definition}
Given a graph $G$ and a Lie bracket expression $L$, a bijection $\sigma:G\leftrightarrow L$ is 
a bijection from the vertices of $G$ to the positions in the Lie bracket expression $L$ compatible
with labels and letters 
(vertices of $G$ are sent to positions in $L$ labeled with the identical letter).
\end{definition}

\begin{example} Following are some basic examples investigating bijections between 
graphs and Lie bracket expressions. 

$\bullet \quad$ There are no bijections 
  $\graphpp{a}{b}{a} \leftrightarrow [a,b]$ because there are three vertices
  in the graph but only two positions in the Lie bracket expression.
  \\ \phantom{x} \qquad
  Similarly, there are no bijections 
  $\linep{a}{b} \leftrightarrow [[a,b],a]$ 
  
$\bullet \quad$ There are no bijections 
  $\graphpp{a}{b}{c} \leftrightarrow [[a,b],a]$ because there is no letter
  $c$ in the Lie bracket expression. 
  
$\bullet \quad$ There is only one bijection 
  $\graphpp{a}{b}{c} \leftrightarrow [[b,c],a]$  given by identifying each vertex
  with the correspondingly labeled position in the bracket expression. 
  
$\bullet \quad$ There are two bijections 
  $\graphpp{a}{b}{a} \leftrightarrow [[a,b],a]$  since there are two ways 
  to choose an identification between the two vertices $a$ and the two bracket positions $a$.
  
$\bullet \quad$ More generally, there are $n!$ bijections 
  $\bigstar (a^n b) \leftrightarrow \lgbrack a^n b\rgbrack$.
\end{example}

Given a graph $G$ and a subset $V$ of the vertices of $G$, write $|V|$ for the full
subgraph of $G$ with vertices from $V$; i.e. two vertices are connected by an edge in 
$|V|$ if and only if they are connected by an edge in $G$.  Recall that a graph is connected if 
every two vertices can be connected by a path of edges.  We will say that directed graphs are
connected if they are connected, ignoring edge directions.

The configuration pairing defined in \cite{Sin06} between directed graphs and rooted trees gives a 
pairing between graphs and Lie bracket expressions which can be defined as follows \cite{Wal10}.

\begin{definition}
Given a graph $G$ and a Lie bracket expression $L$ as well as a bijection 
$\sigma: G \leftrightarrow L$, the $\sigma$-configuration
pairing of $G$ and $L$ is 
\[
\langle G,\ L\rangle_\sigma = 
\begin{cases}
  0,  & 
    \parbox[t]{.6\textwidth}{if $L$ contains a sub-bracket expression $[H,\,K]$ so that
  			the corresponding subgraphs $|\sigma^{-1}H|$ and $|\sigma^{-1}K|$ are not 
            connected graphs with exactly one edge between them in $G$}\\
  (-1)^n, & 
    \parbox[t]{.6\textwidth}{otherwise (where $n$ is the number of edges 
            of $G$ whose orientation corresponds under $\sigma$ to the
            right-to-left orientation of positions in $L$).}
 \end{cases}
\]
The configuration pairing of $G$ and $L$ is the sum over all bijections $\sigma$.
\[
\langle G,\ L \rangle = 
   \sum_{\sigma:G \leftrightarrow L} \langle G,\ L\rangle_\sigma
\]
\end{definition}

Casually, we will say that an edge 
			 $\begin{aligned}\begin{xy}
				(0,-2)*[o]+=<7pt>{\scriptstyle a}="a",
				(6,2)*[o]+=<7pt>{\scriptstyle b}="b", 
			    "a";"b"**\dir{-}?>*\dir{>},
			 \end{xy}\end{aligned}$ 
in $G$ 
whose orientation corresponds under $\sigma$ to the right-to-left orientation
of L (i.e. $\sigma(a)$ is to the right of $\sigma(b)$ in $L$)
``moves leftwards in $L$ under $\sigma$''.
 
\begin{example}
Following are some example computations of configuration pairings.

$\bullet \quad \Bigl\langle \graphpp{a}{b}{c},\ [[b,c],a] \Bigr\rangle = -1$.  
  \\ 
  There is only one bijection.  
  In this bijection only the edge $\linep{a}{b}$  
  moves leftwards in $[[b,c],a]$.
  
\vspace{.2em} 

$\bullet \quad \Bigl\langle \graphpp{a}{a}{b},\ [[a,b],a] \Bigr\rangle = -1 -1 = -2$.
  \\ 
  There are two bijections, each making one edge (either the edge $\linep{a}{a}$ or 
  the edge $\dlinep{a}{b}$) move leftwards in $[[a,b],a]$.
  
\vspace{.2em} 

$\bullet \quad \Bigl\langle \graphpp{a}{b}{a},\ [[a,a],b] \Bigr\rangle = 0$.
  \\ 
  For each of the two bijections, $|\sigma^{-1}\bigl([a,a]\bigr)|$ 
  is disconnected in $G$.
  
\vspace{.2em} 
  
$\bullet \quad \Bigl\langle \longgraph{a}{b}{c}{a},\ [[a,b],[a,c]] \Bigr\rangle = -1 + 1 = 0$. 
  \\ 
  There are two bijections.
  One bijection makes $\linep{c}{a}$ go leftwards.  The other bijection makes $\linep{a}{b}$ 
  and also $\linep{c}{a}$ go leftwards.
  
\vspace{.2em} 
  
$\bullet \quad$ The pairing of a linear (or ``long'') graph 
  $\longgraph{a_1}{a_2}{\vphantom{x}\dots}{a_n}$ with a bracket expression $L$ is equal to the 
  coefficient of the $(a_1a_2\dots a_n)$ term in the associative polynomial for $L$. \cite{Wal10}
\end{example}

The configration pairing encodes a duality between free Lie algebras and
graphs modulo the Arnold and arrow reversing identities \cite{SiWa11}.  
In the current work we will 
use only that the configuration pairing is well defined on Lie algebras -- 
i.e. the configuration pairing
vanishes on Jacobi and anti-commutativity Lie bracket expressions.  Thus the configuration
pairing with graphs can be used to distinguish Lie bracket expressions, and in particular
can be used to establish linear independence. 

The main theorem will be proven essentially via recursive application of the following proposition
whose proof is trivial.

\begin{proposition}\label{P:base case}
Let $w_1$ and $w_2$ be Lyndon-Shirshov words.
If $w_1 = a^n b$ is $a$-simple then 
\[\bigl\langle \bigstar (w_1),\ \lgbrack w_2 \rgbrack\bigr\rangle = 
 \begin{cases} 
     n! & \mbox{if }w_2 = w_1, \\
     0 & \mbox{otherwise.}
 \end{cases} 
\]
A similar result holds if $w_2$ is $a$-simple. 
\end{proposition}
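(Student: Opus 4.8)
The plan is to split into the matching case $w_2 = w_1$ and the non-matching case $w_2 \neq w_1$, and then to deduce the symmetric statement. Throughout, recall that a Lyndon-Shirshov word fully partitions, so $\bigstar(-)$ and $\lgbrack-\rgbrack$ are defined on the words in sight.

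First I would treat $w_2 = w_1 = a^n b$. There are exactly $n!$ bijections $\sigma\colon \bigstar(a^n b)\leftrightarrow \lgbrack a^n b\rgbrack$: the anchor vertex (labelled $b$) must be sent to the unique $b$-position of $[a,[a,\ldots[a,b]\cdots]]$, while the $n$ vertices labelled $a$ may be matched with the $n$ $a$-positions in any order. For each such $\sigma$ I would check that $\langle \bigstar(a^n b),\ \lgbrack a^n b\rgbrack\rangle_\sigma = +1$. Every sub-bracket of the right-normed expression $\lgbrack a^n b\rgbrack$ has the form $[a,K]$ with $K$ itself right-normed on some of the $a$'s together with $b$; the corresponding subgraph $|\sigma^{-1}K|$ is a sub-star, hence connected, and it is joined to the single vertex $\sigma^{-1}(a)$ by exactly the one edge running from that vertex to the anchor. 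So no bijection is annihilated, and $\langle \bigstar(a^n b),\ \lgbrack a^n b\rgbrack\rangle_\sigma = (-1)^m$, where $m$ counts the edges moving leftwards. Since the anchor $b$ occupies the rightmost position of $\lgbrack a^n b\rgbrack$, each of the $n$ edges runs from an $a$-position to a strictly later position, so $m=0$. Summing over the $n!$ bijections gives $\langle \bigstar(w_1),\ \lgbrack w_1\rgbrack\rangle = n!$.

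Next I would treat $w_2 \neq w_1$, which is the only step requiring an idea. The existence of even one bijection $\bigstar(a^n b)\leftrightarrow \lgbrack w_2\rgbrack$ forces the bracket expression $\lgbrack w_2\rgbrack$, and hence the word $w_2$, to contain exactly $n$ letters $a$ and one letter $b$. Because $w_1 = a^n b$ is at once $a$-simple and Lyndon-Shirshov we must have $a<b$ (otherwise the rotation $ba^n$ would be strictly smaller than $a^n b$). But then $a^n b$ is the \emph{only} Lyndon-Shirshov word with this content: any other arrangement is $a^k b a^{n-k}$ with $k<n$, and its cyclic rotation $a^n b$ is strictly lexicographically smaller, so it is not Lyndon-Shirshov. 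Since $w_2 \neq w_1 = a^n b$, no such bijection exists, and $\langle \bigstar(w_1),\ \lgbrack w_2\rgbrack\rangle$ is an empty sum, hence $0$.

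Finally, for the symmetric statement, suppose $w_2 = a^n b$ is $a$-simple. Then $\lgbrack w_2\rgbrack = \lgbrack a^n b\rgbrack$ has letter-content consisting of $n$ copies of $a$ and one $b$ with $a<b$, and a bijection into $\bigstar(w_1)$ forces $\bigstar(w_1)$ — and therefore the word $w_1$, whose letters label the vertices of $\bigstar(w_1)$ — to have this same content. By the uniqueness observation of the previous paragraph, $w_1$ Lyndon-Shirshov with this content means $w_1 = a^n b = w_2$, returning us to the matching case. I expect the only genuine subtlety in the whole argument to be this uniqueness of the Lyndon-Shirshov word with a prescribed content $\{a^n,x\}$ consisting of a single letter $x>a$ together with $n$ copies of $a$; everything else is bookkeeping with the definitions of $\bigstar$, $\lgbrack\,\rgbrack$, and the configuration pairing.
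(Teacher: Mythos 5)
Your proof is correct and follows essentially the same route as the paper's: the paper likewise observes that any bijection forces $w_2$ to have the same letter content as $w_1=a^nb$ (and hence, being Lyndon-Shirshov, to equal $a^nb$), and that each of the $n!$ bijections contributes $+1$. You simply make explicit the connectivity and sign checks, and the uniqueness of the Lyndon-Shirshov word with content $\{a^n,b\}$, which the paper dismisses as trivial.
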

\begin{proof}
Suppose that $w_1$ and $w_2$ are Lyndon-Shirshov words with 
$\bigl\langle \bigstar(w_1),\ \lgbrack w_2 \rgbrack \bigr\rangle \neq 0$.
Note that $w_1$ and $w_2$ must be written with the same letters 
for any bijections 
$\sigma: \bigstar(w_1) \leftrightarrow \lgbrack w_2 \rgbrack$ to exist.  Furthermore
$w_1$ and $w_2$ must share the same initial letter, since Lyndon-Shirshov words always begin
with their lowest-ordered letter.  Thus $w_1 = w_2.$  

If $w_1 = w_2$, then there are $n!$ possible bijections 
$\sigma:\bigstar(a^n b) \leftrightarrow \lgbrack a^n b\rgbrack$.
For each of these $\bigl\langle \bigstar(a^n b),\ \lgbrack a^n b\rgbrack \bigr\rangle_\sigma = 1$.
\end{proof}

\section{The Basis Theorem}

\begin{theorem}\label{T:main}
If $w_1$ and $w_2$ are Lyndon-Shirshov words, then
$\bigl\langle \bigstar(w_1),\ \lgbrack w_2 \rgbrack\bigr\rangle \neq 0$ if and only if $w_1 = w_2$ 
(in this case it is a product of factorials).
\end{theorem}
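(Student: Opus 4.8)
The plan is to induct on the nested partition depth of the Lyndon–Shirshov words, using Proposition~\ref{P:base case} as the base case, and at each stage peeling off the outermost simple partition of $w_1$ and $w_2$ simultaneously. Write $w_1 \twoheadrightarrow \alpha^n \chi$ and $w_2 \twoheadrightarrow \beta^m \psi$ for their (unique) simple partitions, where $\alpha,\beta$ are the appropriate simple subwords and $\chi,\psi$ the final simple subwords. The key structural fact is that $\bigstar(w_1)$ is built from $n$ disjoint copies of $\bigstar(\alpha)$ together with one copy of $\bigstar(\chi)$, with a single edge running from the anchor of each $\bigstar(\alpha)$ into the anchor of $\bigstar(\chi)$; correspondingly $\lgbrack w_2 \rgbrack = \bigl[\lgbrack\beta\rgbrack, [\lgbrack\beta\rgbrack,\ldots,[\lgbrack\beta\rgbrack,\lgbrack\psi\rgbrack]\cdots]]$ is a right-normed bracket in the ``letters'' $\lgbrack\beta\rgbrack$ and $\lgbrack\psi\rgbrack$.

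First I would analyze which bijections $\sigma:\bigstar(w_1)\leftrightarrow\lgbrack w_2\rgbrack$ can contribute a nonzero $\sigma$-term. The connectivity condition in the definition of $\langle-,-\rangle_\sigma$ forces each innermost sub-bracket $[\lgbrack\beta\rgbrack,\cdot]$ of $\lgbrack w_2\rgbrack$ to have its two sides matched under $\sigma$ to subgraphs of $\bigstar(w_1)$ that are connected with exactly one edge between them. Since the only edges of $\bigstar(w_1)$ crossing between a $\bigstar(\alpha)$-block and the rest go from that block's anchor to the anchor of $\bigstar(\chi)$, this should force $\sigma$ to carry each copy of $\lgbrack\beta\rgbrack$ bijectively onto one of the copies of $\bigstar(\alpha)$ (hence $n=m$ and, recursively, $\alpha=\beta$ by the inductive hypothesis applied to the Lyndon–Shirshov subwords), and to carry $\lgbrack\psi\rgbrack$ onto $\bigstar(\chi)$ (hence $\chi=\psi$). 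The counting then factors: $\sigma$ decomposes as a choice of which $\lgbrack\beta\rgbrack$-copy goes to which $\bigstar(\alpha)$-block ($n!$ ways), times an independent bijection on each block, times a bijection $\bigstar(\chi)\leftrightarrow\lgbrack\psi\rgbrack$. The sign $(-1)^{(\text{edges moving leftwards})}$ splits as a product of the sign contributions from inside each block, from inside $\bigstar(\chi)$, and from the $n$ connecting edges; but under the forced matching each connecting edge runs from a $\bigstar(\alpha)$-anchor (mapped into an inner $\lgbrack\beta\rgbrack$ slot) to the $\bigstar(\chi)$-anchor (mapped into the innermost $\lgbrack\psi\rgbrack$ slot, which sits to the right), so every connecting edge moves leftwards and contributes a uniform sign, and in particular no cancellation occurs. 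Summing, $\langle\bigstar(w_1),\lgbrack w_2\rgbrack\rangle = \pm\, n!\,\langle\bigstar(\alpha),\lgbrack\beta\rgbrack\rangle^n\,\langle\bigstar(\chi),\lgbrack\psi\rgbrack\rangle$, which by induction is a product of factorials (nonzero) when $w_1=w_2$ and $0$ otherwise.

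The main obstacle I anticipate is the connectivity bookkeeping in the inductive step: verifying rigorously that the ``exactly one edge between connected subgraphs'' requirement at every internal node of $\lgbrack w_2\rgbrack$ really does force the block-respecting form of $\sigma$, rather than some exotic matching that splits a $\bigstar(\alpha)$-block across several $\lgbrack\beta\rgbrack$-slots. This is where one must use that $\alpha$ and $\chi$ are themselves Lyndon–Shirshov (so they fully partition and have anchors that behave well), that distinct blocks $\bigstar(\alpha)$ are vertex-disjoint with no edges between them, and that the only inter-block edges are the designated anchor-to-anchor edges; a careful induction comparing the number of edges a connected subgraph must contain against the number available should close this. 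The sign uniformity of the connecting edges is then a comparatively routine check once the combinatorial shape of $\sigma$ is pinned down, since the $\bigstar(\chi)$-anchor always lands in the deepest (rightmost) position of the right-normed bracket $\lgbrack w_2\rgbrack$.
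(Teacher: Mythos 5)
Your overall architecture (induct on partition depth, use the connectivity clause of the configuration pairing to force $\sigma$ to respect the block structure of the star graph, then count bijections and check that no sign cancellation occurs) is the same as the paper's, but you run the induction from the outermost partition level inward, and this leaves a genuine gap at exactly the point you flag as ``the main obstacle.'' At the outermost level, all the connectivity condition gives you is that $\sigma^{-1}(\lgbrack\beta\rgbrack)$ is a connected subgraph of the tree $\bigstar(w_1)$, attached to its complement by a single edge, carrying the letters of $\beta$. Since every edge of a tree is a bridge, there can be many such subtrees, and a priori one of them could straddle several $\bigstar(\alpha)$-blocks through the anchor of $\bigstar(\chi)$. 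Your inductive hypothesis (which concerns pairings $\langle\bigstar(u),\lgbrack v\rgbrack\rangle$ for Lyndon--Shirshov $u,v$) does not apply to such a subtree, because you do not yet know it is a star graph of anything; so the step ``hence $n=m$ and, recursively, $\alpha=\beta$'' cannot be justified as stated. You would need either a strengthened inductive hypothesis about arbitrary connected subtrees of star graphs, or a separate structural lemma, and the proposal supplies neither.

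The paper dissolves this obstacle by running the induction in the opposite direction, from the innermost partition outward. Writing $w_2\twoheadrightarrow(a^{n_1}b_1)\cdots(a^{n_k}b_k)$ for the innermost partition, the sub-brackets $\lgbrack a^{n_i}b_i\rgbrack$ must correspond to connected subgraphs of $\bigstar(w_1)$; since the $a$-labelled vertices of $\bigstar(w_1)$ are leaves, each adjacent only to its own anchor, the \emph{only} connected subgraph on the letters $a^{n_i}b_i$ is an innermost star $\bigstar(a^{n_i}b_i)$. This one-line observation is what forces the block-respecting form of $\sigma$; one then collapses these blocks to letters and repeats. I recommend reorganizing your induction accordingly. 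Two smaller points: the connecting edges run from a $\bigstar(\alpha)$-anchor (sitting in a left-hand $\lgbrack\beta\rgbrack$ slot) to the $\bigstar(\chi)$-anchor (sitting in the rightmost $\lgbrack\psi\rgbrack$ slot), so they do \emph{not} move leftwards --- each contributes $+1$, and in fact every contributing $\sigma$ has $\langle\,\cdot\,\rangle_\sigma=+1$, so your final ``$\pm$'' should be ``$+$''; and your appeal to $\alpha$, $\chi$ being Lyndon--Shirshov so that the inductive hypothesis applies to them is itself a nontrivial fact about the partition that would need justification, whereas the paper only needs that a Lyndon--Shirshov word begins with its lowest-ordered subword at each level.
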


Our desired result follows as a simple corollary.

\begin{corollary}
The left-greedy bracketing of Lyndon-Shirshov words gives a basis for free Lie algebras.
\end{corollary}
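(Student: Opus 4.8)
The plan is to prove Theorem~\ref{T:main} by strong induction on the common length of $w_1$ and $w_2$, with Proposition~\ref{P:base case} as the base case (it also disposes of any inductive step in which one of the two words turns out to be $a$-simple). First I would record two reductions. A bijection $\sigma:\bigstar(w_1)\leftrightarrow\lgbrack w_2\rgbrack$ exists only when $w_1$ and $w_2$ use the same letters with the same multiplicities, and a Lyndon-Shirshov word begins with its smallest letter; so we may assume $w_1$ and $w_2$ have the same length and the same initial letter $a$. Second, and more importantly, I would check by a short induction on the recursive definition that $\bigstar(w)$ is always a \emph{tree}: it is connected, and the recursion $\bigstar(\alpha^n\chi)$ assembles it from $n+1$ smaller star graphs using exactly $n$ new edges, so ``$\#\text{edges}=\#\text{vertices}-1$'' persists. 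This makes the vanishing clause of the configuration pairing geometric: splitting a tree $T$ into two connected pieces joined by exactly one edge is the same as deleting one edge of $T$, so a summand $\langle T,L\rangle_\sigma$ with $T$ a tree is nonzero precisely when the nested bracket structure of $L$, transported by $\sigma$, is realized by successively cutting single edges of $T$.

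For the inductive step I would use the top-level simple partitions $w_1=\alpha_1^{m_1}\chi_1$ and $w_2=\alpha_2^{m_2}\chi_2$, with $\alpha_i\neq\chi_i$ and $m_i\geq 1$, so that $\lgbrack w_2\rgbrack=[\,\lgbrack\alpha_2\rgbrack,\,\lgbrack\alpha_2^{m_2-1}\chi_2\rgbrack\,]$ while $\bigstar(w_1)$ is $m_1$ disjoint copies of the tree $\bigstar(\alpha_1)$ together with $\bigstar(\chi_1)$, joined by the $m_1$ edges that run from the petal anchors into the anchor of $\bigstar(\chi_1)$. Here I would invoke the standard fact that the pieces $\alpha_i$ and $\chi_i$ are again Lyndon-Shirshov words — it follows from the same cyclic-rotation inequalities used to prove that every Lyndon-Shirshov word fully partitions — so, being strictly shorter, they fall under the inductive hypothesis. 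Expanding the pairing over the outermost bracket of $\lgbrack w_2\rgbrack$ writes $\langle\bigstar(w_1),\lgbrack w_2\rgbrack\rangle$ as a signed sum, over edge-cuts of $\bigstar(w_1)$ that split off a subtree carrying the letters of $\alpha_2$, of products $\langle S_1,\lgbrack\alpha_2\rgbrack\rangle\,\langle S_2,\lgbrack\alpha_2^{m_2-1}\chi_2\rgbrack\rangle$. The goal is then to collapse this to the ``super-letter'' picture: regarding each copy of $\bigstar(\alpha_1)$ as one super-vertex and each $\lgbrack\alpha_2\rgbrack$-block as one super-letter, the whole pairing should equal the super-level pairing $\bigl\langle\bigstar(\widehat a^{\,m_1}\widehat x),\,\lgbrack\widehat a^{\,m_2}\widehat x\rgbrack\bigr\rangle$, to which Proposition~\ref{P:base case} applies verbatim, times the super-letter pairings $\langle\bigstar(\alpha_1),\lgbrack\alpha_2\rgbrack\rangle^{m_1}$ and $\langle\bigstar(\chi_1),\lgbrack\chi_2\rgbrack\rangle$. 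The inductive hypothesis then forces $\alpha_1=\alpha_2$ and $\chi_1=\chi_2$, after which Proposition~\ref{P:base case} forces $m_1=m_2$, so $w_1=w_2$; and in the diagonal case the same computation evaluates $\langle\bigstar(w),\lgbrack w\rgbrack\rangle=m!\cdot\langle\bigstar(\alpha),\lgbrack\alpha\rgbrack\rangle^{m}\cdot\langle\bigstar(\chi),\lgbrack\chi\rgbrack\rangle$, a product of factorials by induction, with every surviving bijection contributing $+1$ since one checks directly that no edge of $\bigstar(w)$ is oriented right-to-left in $\lgbrack w\rgbrack$.

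The crux — and the step I expect to be hardest — is the collapse to the super-letter picture. A bijection $\sigma$ need not respect the petal/core decomposition of $\bigstar(w_1)$, and small experiments show that edge-cuts interior to a petal can contribute individually nonzero terms (a subtree of a star graph that is not itself a star graph can pair nontrivially with the left-greedy bracket of a Lyndon-Shirshov word), so the vanishing of the unwanted part of the sum is a genuine cancellation and not term-by-term positivity. I would handle it by establishing a multiplicativity lemma for the configuration pairing between a graph assembled by gluing ``blobs'' each at a single distinguished vertex and a Lie bracket assembled from Lie-element ``super-letters'' — roughly, that the pairing detects each blob only through its own pairing with the matching super-letter — and then by using that $w_1$ and $w_2$ share their letters and are both Lyndon-Shirshov to exclude the stray alignments that would otherwise survive (for instance, no Lyndon-Shirshov word on the letters of $w_1$ has such a rogue subtree of $\bigstar(w_1)$ for its first top-level block). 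Making this lemma precise, handling the bookkeeping when some $\chi_i$ is a single letter or when $m_1\neq m_2$ before the super-letter factors have been identified, and confirming the sign in the diagonal case, is where the care is needed.

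Finally the Corollary is immediate: Theorem~\ref{T:main} says the matrix $\bigl(\langle\bigstar(w_1),\lgbrack w_2\rgbrack\rangle\bigr)_{w_1,w_2}$ of pairings over Lyndon-Shirshov words is diagonal with nonzero diagonal entries, so the left-greedy brackets of Lyndon-Shirshov words are linearly independent; since in each degree their number equals the number of Lyndon-Shirshov words of that degree, which is the dimension of that graded piece of the free Lie algebra, they form a basis.
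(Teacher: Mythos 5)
Your closing paragraph --- deducing the corollary from Theorem~\ref{T:main} because the pairing matrix $\bigl(\langle\bigstar(w_1),\lgbrack w_2\rgbrack\rangle\bigr)$ over Lyndon-Shirshov words is diagonal with nonzero entries, hence the brackets are linearly independent, and their count in each degree matches the dimension of the free Lie algebra --- is exactly the paper's proof of the corollary. If Theorem~\ref{T:main} is granted, you are done, in the same way the paper is.

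The substance of your proposal, however, is a re-proof of Theorem~\ref{T:main}, and there your route is genuinely different from the paper's and contains a gap that you yourself flag. You induct on length via the \emph{outermost} simple partition $w_i\twoheadrightarrow\alpha_i^{m_i}\chi_i$ and expand the pairing over the top bracket of $\lgbrack w_2\rgbrack$ using bracket/cobracket duality; this forces you to control edge-cuts of $\bigstar(w_1)$ interior to a petal, and, as you correctly observe, those terms need not vanish individually, so everything hinges on a cancellation (``multiplicativity'') lemma that you do not prove. That lemma is the entire difficulty of the theorem, so as it stands the argument is incomplete. The paper sidesteps the issue by inducting on the depth of the nested partition from the \emph{inside out}: for a bijection $\sigma$ with nonzero $\sigma$-pairing, each innermost sub-bracket $\lgbrack a^{n_i}b_i\rgbrack$ of $\lgbrack w_2\rgbrack$ must correspond under $\sigma$ to a connected subgraph of $\bigstar(w_1)$, and the only connected subgraph of a star graph on the letters $a^{n_i}b_i$ (with $a$ the common initial letter) is $\bigstar(a^{n_i}b_i)$; hence $w_1$ is assembled from the same simple blocks, the first block is pinned down because a Lyndon-Shirshov word begins with its least subword, and one recurses treating blocks as letters. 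This makes the forcing term-by-term --- every bijection contributing a nonzero term already respects the block structure --- so no cancellation argument is ever needed, and positivity of all surviving terms in the diagonal case (your $m!\,\langle\bigstar(\alpha),\lgbrack\alpha\rgbrack\rangle^{m}\,\langle\bigstar(\chi),\lgbrack\chi\rgbrack\rangle$ computation, which agrees with the paper's) finishes the proof. If you want to salvage your outermost-first scheme, the missing lemma is where all the work lies; otherwise, reorganizing your induction around the innermost partition turns your sketch into the paper's proof.
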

\begin{proof}
A perfect pairing of graphs with left-greedy brackets of Lyndon-Shirshov words implies that 
the left-greedy brackets of Lyndon-Shirshov words are
linearly independent. Since the number of Lyndon-Shirshov words of length $n$ equals the dimension of
the vector space of length $n$ Lie bracket expressions, this is enough to show that left-greedy
brackets of Lyndon-Shirshov words form a basis for the free Lie algebra.
\end{proof}

Now we will prove the main theorem.

\begin{proof}[Proof of Theorem~\ref{T:main}]
Suppose that $w_1$ and $w_2$ are 
Lyndon-Shirshov words with nonzero pairing
\[\bigl\langle \bigstar(w_1),\ \lgbrack w_2 \rgbrack \bigr\rangle \neq 0.\]
Fix a bijection $\sigma:\bigstar(w_1) \leftrightarrow \lgbrack w_2 \rgbrack$.
We will show that $w_1 = w_2$ by inducting on depth of the nested partition resulting from 
fully partitioning the Lyndon-Shirshov words $w_1$ and $w_2$. 

First note, as in the proof of Proposition~\ref{P:base case}, that 
$w_1$ and $w_2$ must be written with the same letters and must share the same initial 
letter, call it ``$a$".  
Thus $w_1$ and $w_2$ both fully partition
where the innermost partitions
are $a$-simple words.

Write $w_2 \twoheadrightarrow (a^{n_1} b_1)(a^{n_2} b_2) \dots (a^{n_k} b_k)$ 
for the innermost partition of $w_2$. 
According to its recursive definition, the bracket expression
$\lgbrack w_2 \rgbrack$ will have sub-bracket expressions $\lgbrack a^{n_i} b_i\rgbrack$.
From the definition of the configuration pairing, these must correspond under $\sigma$ 
to connected, disjoint
subgraphs of $\bigstar(w_1)$.  
However, the only possible connected subgraph of a star graph (with initial letter $a$) 
using the letters
$a^{n_i} b_i$ is $\bigstar(a^{n_i}b_i)$.  
Note that this implies $w_1$ is composed of the subwords $(a^{n_i}b_i)$
(though possibly written in a different order).  Furthermore, the first subword of $w_1$
must be $(a^{n_1}b_1)$ (just as in $w_2$), 
because Lyndon-Shirshov words must begin with their lowest ordered subword.

The induction step is equivalent to the previous case, treating subwords as letters.
At the end of the previous case, for each simple subword $u$ of $w_2$ the sub-bracket expressions 
$\lgbrack u\rgbrack$ of $\lgbrack w_2\rgbrack$ correspond to disjoint connected subgraphs
$\bigstar (u)$ of $\bigstar(w_1)$.  Furthermore, the initial subword of $w_2$ coincides
with the initial subword of $w_1$.

\medskip

To finish the proof, we must note that 
$\bigl\langle \bigstar(w),\ \lgbrack w\rgbrack \bigr\rangle \neq 0$ when $w$ is a Lyndon-Shirshov word.
This is clear since all bijections $\sigma:\bigstar(w) \leftrightarrow \lgbrack w \rgbrack$
have $\bigl\langle \bigstar(w), \lgbrack w\rgbrack\bigr\rangle_\sigma > 0$.
In fact, a few short computations show that 
\[
\begin{aligned}
\bigl\langle \bigstar(a^n b),\ \lgbrack a^n b\rgbrack \bigr\rangle &= n! \\
 \Bigl\langle 
   \bigstar\bigl((a^{n_1}b_1)^m (a^{n_2}b_2)\bigr),\ \lgbrack (a^{n_1}b_1)^m (a^{n_2}b_2) \rgbrack
 \Bigr\rangle
   &= m!\, (n_1!)^m\, n_2! \\
 \mathrm{etc.}
\end{aligned}
\]
\end{proof}

\section{Projection onto the Left-Greedy Basis}

The previous theorem~\ref{T:main} is of independent interest, because it gives a 
direct, computational method 
for writing Lie bracket elements in terms of the left-greedy Lyndon-Shirshov basis via projection.

Given a Lie bracket expression $L$ write $\{w_k\}$ for the set of Lyndon-Shirshov words written
using the 
letters in $L$ (with multiplicity).  Left-greedy brackets of Lyndon-Shirshov words form a 
linear basis, so it is
possible to write $L$ as a linear combination of the $\lgbrack w_k \rgbrack$:
\[ L = c_1\, \lgbrack w_1\rgbrack + \cdots + c_n \lgbrack w_n \rgbrack. \]
We may compute the constants $c_k$ by pairing with 
$\bigstar(w_k)$ since 
$\bigl\langle \bigstar(w_k),\, \lgbrack w_j \rgbrack \bigr\rangle = 0$ for $j \neq k$
by Theorem~\ref{T:main}.  This proves the following.

\begin{corollary}
  Given a Lie bracket expression $L$, the following formula holds
  \[ L = \!\! \sum_{\substack{w\text{\ a}\\ \text{Lyndon-Shirshov word}}} \!\! 
                        \frac{\bigl\langle \bigstar(w),\ L\bigr\rangle}
                             {\bigl\langle \bigstar(w),\ \lgbrack w\rgbrack\bigr\rangle} \  \lgbrack w\rgbrack.\]
\end{corollary}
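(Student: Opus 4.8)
The plan is to verify that the claimed formula for $L$ is exactly the statement that, after expanding $L$ in the left-greedy basis $\{\lgbrack w\rgbrack\}$ indexed by Lyndon-Shirshov words, each coefficient can be isolated by pairing with the corresponding star graph. Concretely, I would first invoke the Corollary immediately preceding Theorem~\ref{T:main} (the basis statement): since the left-greedy brackets of Lyndon-Shirshov words form a linear basis for the free Lie algebra, every Lie bracket expression $L$ written with a fixed multiset of letters can be written uniquely as a finite linear combination $L = \sum_k c_k \lgbrack w_k\rgbrack$, where the sum runs over the finitely many Lyndon-Shirshov words $w_k$ formed from those letters (any Lyndon-Shirshov word using different letters cannot appear, since a bracket expression expands into words all using exactly the letters of $L$). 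So only finitely many terms of the infinite sum in the Corollary are actually nonzero, and the sum is well-defined.

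Next I would fix one Lyndon-Shirshov word $w$ and apply the bilinear pairing $\langle\bigstar(w),\,-\,\rangle$ to both sides of $L = \sum_k c_k \lgbrack w_k\rgbrack$. By linearity of the configuration pairing (which is well-defined on Lie algebras, as noted in the discussion following the definition of the pairing, so it does not matter which bracket representative of $L$ we use) we get $\langle\bigstar(w),\,L\rangle = \sum_k c_k \langle\bigstar(w),\,\lgbrack w_k\rgbrack\rangle$. By Theorem~\ref{T:main}, $\langle\bigstar(w),\,\lgbrack w_k\rgbrack\rangle = 0$ whenever $w_k \neq w$, so the right-hand side collapses to the single term $c_w\,\langle\bigstar(w),\,\lgbrack w\rgbrack\rangle$ (where I write $c_w$ for the coefficient of $\lgbrack w\rgbrack$ in the expansion, taken to be $0$ if $w$ does not appear). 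Again by Theorem~\ref{T:main}, $\langle\bigstar(w),\,\lgbrack w\rgbrack\rangle$ is a product of factorials, in particular nonzero, so we may divide to obtain $c_w = \langle\bigstar(w),\,L\rangle / \langle\bigstar(w),\,\lgbrack w\rgbrack\rangle$. Substituting these values of $c_w$ back into $L = \sum_w c_w \lgbrack w\rgbrack$ gives exactly the displayed formula.

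There is essentially no hard step here — the content is entirely front-loaded into Theorem~\ref{T:main} and the basis Corollary. The only points that need a word of care are: (i) confirming that the infinite-looking sum is really finite, which follows because a Lie bracket expression in a fixed multiset of letters pairs nontrivially only with star graphs on that same multiset, and there are finitely many Lyndon-Shirshov words on a finite multiset; and (ii) making sure the pairing is applied to the Lie algebra element $L$ rather than to a particular parenthesization, which is legitimate precisely because the configuration pairing descends to the free Lie algebra (it vanishes on Jacobi and anti-commutativity relators). With both of these observed, the computation of the $c_w$ is the one-line argument above, and the Corollary follows.
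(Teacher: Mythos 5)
Your proposal is correct and matches the paper's argument exactly: the paper also expands $L$ in the left-greedy basis over the finitely many Lyndon-Shirshov words on the letters of $L$, pairs with $\bigstar(w_k)$, and uses Theorem~\ref{T:main} to isolate each coefficient. The extra care you take about finiteness of the sum and well-definedness of the pairing on the Lie algebra is implicit in the paper but harmless to make explicit.
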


Recall that the denominators 
${\bigl\langle \bigstar(w),\ \lgbrack w\rgbrack\bigr\rangle}$ are products
of factorials.  Interestingly, each coefficient in the expression above must be an
integer (despite their large denominator). 

Pairing computations are aided by the bracket/cobracket compatibility property of the 
configuration pairing.  Bracket/cobracket compatibility states that pairings 
of a graph $G$ with a 
bracket expression $[L,K]$ may be computed by calculating pairings of $L$ and $K$ with 
the subgraphs obtained by cutting $G$ into two pieces by removing an edge.
The following is Prop. 3.14 of \cite{SiWa11}. 

\begin{proposition}
Bracketing Lie expressions is dual to cutting graphs 
 \[\bigl\langle G,\ [H, K] \bigr\rangle =  \sum_e 
 \bigl\langle G^{\hat e}_1,\ H\bigr\rangle \cdot \bigl\langle G^{\hat e}_2,\ K\bigr\rangle
 \ - \ 
 \bigl\langle G^{\hat e}_1,\ K\bigr\rangle \cdot \bigl\langle G^{\hat e}_2,\ H\bigr\rangle\]
 where $G^{\hat e}_1$ and $G^{\hat e}_2$ are the graphs obtained by removing edge $e$ from 
 $G$, ordered so that $e$ pointed from $G^{\hat e}_1$ to $G^{\hat e}_2$ in $G$.
\end{proposition}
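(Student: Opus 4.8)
The plan is to prove the identity by unwinding both sides to the level of individual bijections $\sigma$ and matching them term by term; beyond the recursive definition of the pairing itself, no induction is needed. First I would record the elementary block structure: the positions of the bracket expression $[H,K]$ are the disjoint union of the positions of $H$ and the positions of $K$, with every position of $H$ lying to the left of every position of $K$. Hence a bijection $\sigma:G\leftrightarrow[H,K]$ is the same data as a partition of the vertices $V(G)=A\sqcup B$, where $A=\sigma^{-1}(\text{positions of }H)$, together with bijections $\sigma_1:|A|\leftrightarrow H$ and $\sigma_2:|B|\leftrightarrow K$. (Partitions whose labels on $A$ fail to match the letters of $H$ simply admit no such $\sigma$ and contribute nothing.)

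Next I would express $\langle G,[H,K]\rangle_\sigma$ through $\langle |A|,H\rangle_{\sigma_1}$ and $\langle |B|,K\rangle_{\sigma_2}$. Applying the vanishing clause of the definition to the outermost sub-bracket $[H,K]$ itself shows $\langle G,[H,K]\rangle_\sigma=0$ unless $|A|$ and $|B|$ are connected and joined by exactly one edge $e$ of $G$. When they are, every remaining sub-bracket of $[H,K]$ lies wholly inside $H$ or wholly inside $K$, and because $|A|$ and $|B|$ are \emph{full} subgraphs of $G$, the connectedness-and-single-edge conditions those sub-brackets impose coincide exactly with the conditions imposed on $\sigma_1$ inside $|A|$ and on $\sigma_2$ inside $|B|$. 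For the sign, I would split the edges of $G$ into those with both ends in $A$, those with both ends in $B$, and the single crossing edge $e$: an edge inside $A$ moves leftwards in $[H,K]$ under $\sigma$ exactly when it does so in $H$ under $\sigma_1$ (the internal order of the $H$-block is unchanged), likewise for $B$, and $e$ moves leftwards precisely when it points from the $B$-side (the $K$-labelled part) into the $A$-side. Combining these, $\langle G,[H,K]\rangle_\sigma=\varepsilon_e\,\langle |A|,H\rangle_{\sigma_1}\,\langle |B|,K\rangle_{\sigma_2}$, where $\varepsilon_e=-1$ if $e$ points from the $K$-part into the $H$-part and $\varepsilon_e=+1$ otherwise.

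Finally I would sum over $\sigma$, first grouping by the underlying partition $A\sqcup B$. Only partitions of $V(G)$ into two connected pieces joined by a single edge contribute anything, and such partitions are in bijection with pairs (bridge $e$ of $G$, choice of which component of $G-e$ is $A$); equivalently each bridge $e$ contributes exactly two partitions. Summing $\langle |A|,H\rangle_{\sigma_1}$ over $\sigma_1$ and $\langle |B|,K\rangle_{\sigma_2}$ over $\sigma_2$ collapses the products to $\langle |A|,H\rangle\langle |B|,K\rangle$, and separating the two partitions attached to a bridge $e$ yields exactly $\langle G^{\hat e}_1,H\rangle\langle G^{\hat e}_2,K\rangle-\langle G^{\hat e}_1,K\rangle\langle G^{\hat e}_2,H\rangle$ once $G^{\hat e}_1,G^{\hat e}_2$ are ordered by the direction of $e$ as in the statement. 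Non-bridge edges give rise to no contributing partition, so extending the outer sum from bridges to all edges changes nothing, and the formula follows; if $G$ is disconnected then neither side has a nonzero term and the identity is trivial.

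The step I expect to need the most care is the middle one: checking that the vanishing (connectedness) conditions for $[H,K]$ decouple cleanly into the conditions for $H$ and for $K$ with no cross-terms, and that the leftward-edge count is truly additive over the three groups of edges. Both facts rest on the observation that an induced subgraph of an induced subgraph is again induced, together with the block structure of the positions of $[H,K]$; they are routine, but must be pinned down precisely for the sign bookkeeping to be airtight.
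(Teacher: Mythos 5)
Your proof is correct, but note that the paper does not actually prove this proposition: it imports it verbatim as Prop.~3.14 of the cited Sinha--Walter paper, so there is no in-text argument to compare against. What you have written is a legitimate self-contained verification directly from the definition of the configuration pairing as given here, and all three of your steps check out. The decomposition of a bijection $\sigma: G \leftrightarrow [H,K]$ into a vertex partition $A \sqcup B$ plus bijections $\sigma_1, \sigma_2$ is exactly right; the decoupling of the vanishing conditions does rest, as you say, on the subgraphs $|A|$ and $|B|$ being induced, so that sub-brackets of $H$ see the same connectivity inside $|A|$ as inside $G$; and the sign bookkeeping is airtight --- edges internal to $A$ or $B$ preserve their leftward/rightward status because the $H$-block and $K$-block each keep their internal order inside $[H,K]$, while the unique crossing edge contributes $-1$ precisely when it points from the $K$-part into the $H$-part, which is what converts the two orderings of each separating edge into the two signed terms $\langle G^{\hat e}_1, H\rangle\langle G^{\hat e}_2, K\rangle - \langle G^{\hat e}_1, K\rangle\langle G^{\hat e}_2, H\rangle$ under the stated source/target convention. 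Your handling of the degenerate cases (non-separating edges contribute nothing on either side; a disconnected $G$ kills every $\sigma$-term because no $A \sqcup B$ can be two connected pieces joined by one edge) is also the right reading of the edge sum. The only cosmetic caveat is that the statement's sum ranges over all edges $e$, so one should make explicit --- as you do --- the convention that terms for edges whose removal does not split $G$ into exactly two pieces are zero; with that said, nothing is missing.
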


\begin{remark}
Applying bracket/cobracket duality and the definition of the configuration pairing yields a 
recursive method for computation of $\langle G,\ L\rangle$.  Consider the outer-most bracketing 
$L = [H,\ K]$. Look for edges of $G$ which can be removed to separate $G$ into subgraphs 
$G^{\hat e}_1$ and $G^{\hat e}_2$ whose sizes matches that of $H$ and $K$, and check that 
the subgraphs are written using 
the same letters as $H$ and $K$.  
If this is not possible, then the bracketing is 0.
Otherwise the bracketing is given by summing 
$\bigl\langle G^{\hat e}_1,\ H\bigr\rangle \cdot \bigl\langle G^{\hat e}_2,\ K\bigr\rangle$
(or the negative
$\ - \ 
 \bigl\langle G^{\hat e}_1,\ K\bigr\rangle \cdot \bigl\langle G^{\hat e}_2,\ H\bigr\rangle$
if $e$ pointed so that $G^{\hat e}_1$ corresponded to $K$ instead of $H$)
over all such edges.
Recurse.  Note that removing an edge from a star graph will always result in subgraphs which 
are themselves star graphs (though possibly not star graphs of Lyndon-Shirshov words).
\end{remark}

\begin{example}
Consider the Lie bracket expression $L = [[[a,b],b],[[a,b],a]]$.  There are 
three Lyndon-Shirshov words with the letters $aaabbb$.  These words, along with their partition,
left-greedy bracketings, and values of $\bigl\langle \bigstar(w),\  \lgbrack w\rgbrack \bigr\rangle$ are
as follows.

$\bullet \quad aaabbb$ which partitions as 
    $\underline{ \underline{ \underline{aaab}\, \underline{b}}\, \underline{\underline{b}}}$ 
    \\ \phantom{x} \qquad
    with left-greedy bracketing
    $\lgbrack aaabbb\rgbrack = \bigl[\bigl[ [a,[a,[a,b]]],\;b\bigr],\; b\bigr] $ 
    \\ \phantom{x} \qquad 
    and $\bigl\langle \bigstar(aaabbb),\ \lgbrack aaabbb\rgbrack \bigr\rangle = 3!$.

$\bullet \quad aababb$ which partitions as 
    $\underline{\underline{ \underline{aab}\;\underline{ab}}\;\underline{\underline{b}}}$
    \\ \phantom{x} \qquad
    with left-greedy bracketing
    $\lgbrack aababb \rgbrack = \bigl[\bigl[ [a,[a,b]],\; [a,b]\bigr],\; b\bigr] $ 
    \\ \phantom{x} \qquad 
    and $\bigl\langle \bigstar(aababb),\ \lgbrack aababb\rgbrack \bigr\rangle = 2!$.

$\bullet \quad aabbab$ which partitions as
	$\underline{\underline{ \underline{aab}\; \underline{b}}\; \underline{\underline{ab}}}$
    \\ \phantom{x} \qquad
    with left-greedy bracketing
    $\lgbrack aabbab \rgbrack = \bigl[\bigl[ [a,[a,b]],\; b\bigr],\; [a,b]\bigr]$ 
    \\ \phantom{x} \qquad 
    and $\bigl\langle \bigstar(aabbab),\ \lgbrack aabbab\rgbrack \bigr\rangle = 2!$.
    
The configuration pairings with $L$ are as follows.

%
$\bullet \quad \bigl\langle \bigstar(aaabbb),\ [[[a,b],b],[[a,b],a]] \bigr\rangle = 0$, 
  because no single edge of $\bigstar(aaabbb)$ can be removed to separate it into subgraphs
  one of which has a single $a$ and two $b$'s (corresponding to the sub-bracket $[[a,b],b]$).

%
$\bullet \quad \bigl\langle \bigstar(aababb),\ [[[a,b],b],[[a,b],a]] \bigr\rangle = 2$, 
 because only the edge connecting $\bigstar(aab)$ to the remainder of the graph cuts 
 $\bigstar(aababb)$ appropriately.  This reduces the computation to 
 \[- \bigl\langle \bigstar(aab),\ [[a,b],a]\bigr\rangle \cdot 
     \bigl\langle \bigstar(abb),\ [[a,b],b]\bigr\rangle = -(-2)\cdot 1 = 2.\]

%
$\bullet \quad \bigl\langle \bigstar(aabbab),\ [[[a,b],b],[[a,b],a]] \bigr\rangle = -2$, 
 because only the edge connecting $\bigstar(aab)$ to the remainder of the graph cuts 
 $\bigstar(aabbab)$ appropriately.  The computation reduces similarly.
    
Thus $L = \lgbrack aababb \rgbrack - \lgbrack aabbab \rgbrack.$
\end{example}

\bibliographystyle{plain}
\bibliography{bibliography.bib}

\begin{thebibliography}{10}

\bibitem{BoCh06}
L.~Bokut and E.S. Chibrikov.
\newblock Lyndon-{S}hirshov words, {G}r\"obner-{S}hirshov bases, and free {L}ie
  algebras.
\newblock In {\em Non-associative algebra and its applications}, volume 246 of
  {\em Lect. Notes Pure Appl. Math.}, pages 17--39. Chapman \& Hall/CRC, Boca
  Raton, FL, 2006.

\bibitem{Shir09}
L.A. Bokut, V.~Latyshev, I~Shestakov, and E~Zelmanov.
\newblock Selected works of {AI} {S}hirshov.
\newblock {\em AMC}, 10:12, 2009.

\bibitem{ChFoLy58}
K.T. Chen, R.~Fox, and R.~Lyndon.
\newblock Free differential calculus, {IV}. {T}he quotient groups of the lower
  central series.
\newblock {\em Annals of Mathematics}, pages 81--95, 1958.

\bibitem{Chi06}
E.S. Chibrikov.
\newblock A right normed basis for free lie algebras and {L}yndon--{S}hirshov
  words.
\newblock {\em Journal of Algebra}, 302(2):593--612, 2006.

\bibitem{Loth97}
M.~Lothaire.
\newblock {\em Combinatorics on words}.
\newblock Cambridge Mathematical Library. Cambridge University Press,
  Cambridge, 1997.

\bibitem{Mel97}
G.~M{\'e}lan{\c{c}}on.
\newblock Combinatorics of bases of free {L}ie algebras.
\newblock Unpublished manuscript, 1997.

\bibitem{MeRe89}
G.~M{\'e}lan{\c{c}}on and C.~Reutenauer.
\newblock Lyndon words, free algebras and shuffles.
\newblock {\em Canadian J. Math}, 41:577--591, 1989.

\bibitem{Reu93}
C.~Reutenauer.
\newblock {\em Free lie algebras}, volume~7 of {\em London Mathematical Society
  Monographs. New Series}.
\newblock The Clarendon Press Oxford University Press, New York, 1993.

\bibitem{Sin05}
D.~Sinha.
\newblock A pairing between graphs and trees.
\newblock {\em arXiv preprint math/0502547}, 2005.

\bibitem{Sin06}
D.~Sinha.
\newblock Operads and knot spaces.
\newblock {\em Journal of the American Mathematical Society}, 19(2):461--486,
  2006.

\bibitem{SiWa11}
D.~Sinha and B.~Walter.
\newblock Lie coalgebras and rational homotopy theory, {I}: graph coalgebras.
\newblock {\em Homology, Homotopy and Applications}, 13(2):263--292, 2011.

\bibitem{Sto08}
R.~St{\"o}hr.
\newblock Bases, filtrations and module decompositions of free {L}ie algebras.
\newblock {\em Journal of Pure and Applied Algebra}, 212(5):1187--1206, 2008.

\bibitem{Wal10}
B.~Walter.
\newblock Lie algebra configuration pairing.
\newblock {\em arXiv preprint arXiv:1010.4732}, 2010.

\end{thebibliography}

\end{document}